\newtheorem{theorem}{Theorem}[section]
\newtheorem{lemma}[theorem]{Lemma}
\newtheorem{corollary}[theorem]{Corollary}
\theoremstyle{plain}
\newtheorem{prop}[theorem]{Propersition}
\newlength{\LL}\settowidth{\LL}{5000}
\title{ \Huge \bf New Approach for Interior Regularity of Monge-Amp\`{e}re Equations}
\author{Chen Ruosi \footnote{crs22@mails.tsinghua.edu.cn}
~,~Zhou Xingchen  \footnote{zxc3zxc4zxc5@stu.xjtu.edu.cn}
~~ \\ {\it Department of Mathematical Sciences} \\ {\it Tsinghua University}
}
\date{\today}
\begin{document}

\maketitle

\hrulefill

\begin{abstract}
By developing an integral approach, we present a new method for the interior regularity of strictly convex solution of the Monge-Amp\`{e}re equation $\det D^2 u = 1$. 
\end{abstract}

\section{Introduction}

In this article, we derive an interior regularity for the Monge-Amp\`{e}re equation 
\begin{equation}
    \det D^2 u = 1 
    \label{eq:MA}
\end{equation}
in general dimension. 
This equation arises naturally from geometric problems such as Weyl and Minkowski problem, and from applied mathematics such as optimal transportation. It has been intensively studied since the last century, and one can refer to \cite{Caffa1990AnnalsW2p,Yau1977CPAM_MAregularity,FigalliMAbook} for more information.

\

Very recently, through a doubling argument in terms of the extrinsic distance function on the maximal Lagrangian submanifold, Shankar-Yuan \cite{YuanShankar2024doubling} gave a new proof for the interior regularity of this equation. 
In this paper, we derive an interior $C^2$ estimate using an integral method. The regularity is then a direct corollary, see \cite{CNS1984CPAM_Dirichlet,GTbook} and references therein for more details.

\

We will denote by $C$ a universal constant depending only on $n, \|u\|_{C^{0,1}(\Omega)}, \|w\|_{C^{0,1}(\Omega)}$, which may change line by line. We state our result in the following:

\begin{theorem}
\label{Thm_MA}
Let $u\in C^4(\Omega)$ be a convex solution of \eqref{eq:MA} on a domain $\Omega \subset \mathbb{R}^n$. Suppose that there is a function $w\in C^2(\Omega)$ such that $w>u$ in $\Omega$ and $ w = u $ on $\partial\Omega$, and there exists some universal constant $C$ such that
\begin{equation}
\label{ineq:Trace_w}
    u^{ij} w_{ij} \geq -C, % \det (I_n+D^2u)^\alpha, \ x\in \Omega
\end{equation}
where $[u^{ij}]$ is the inverse of $D^2 u$. Then 
$$
(w-u)^{(2^n+1)(2n+1)} |D^2 u| \leq C(n,\|u\|_{C^{0,1}(\Omega)}, \|w\|_{C^{0,1}(\Omega)}).
$$
\end{theorem}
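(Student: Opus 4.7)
Write $\eta := w-u \geq 0$, with $\eta|_{\partial\Omega}=0$, and let $L := u^{ij}\partial_{ij}$ denote the linearized Monge-Amp\`{e}re operator. Because $\det D^2u \equiv 1$ the cofactor matrix coincides with $[u^{ij}]$ and is row-divergence-free (Piola identity), so $L$ is in divergence form, $L\phi = \partial_j(u^{ij}\partial_i\phi)$; this is what makes an integral approach natural. From $u^{ij}u_{ij}\equiv n$ and hypothesis \eqref{ineq:Trace_w} we obtain $L\eta = u^{ij}w_{ij} - n \geq -C$. Differentiating $\log\det D^2u\equiv 0$ twice and tracing yields the classical concavity identity
\[
L(\Delta u) = \sum_k u^{ip}u^{jq} u_{ijk} u_{pqk} \geq 0.
\]

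\textbf{Weighted integration by parts.} For parameters $\alpha, p >0$, I would test $L(\Delta u)\geq 0$ against $\eta^\alpha (\Delta u)^{p-1}$ and integrate by parts. The boundary contribution vanishes because $\eta|_{\partial\Omega}=0$ and $\alpha$ is chosen large. Cauchy-Schwarz in the metric $[u^{ij}]$ then produces a Caccioppoli-type bound
\[
\int_\Omega \eta^\alpha(\Delta u)^{p-2}u^{ij}(\Delta u)_i(\Delta u)_j
\;\leq\; \frac{C\alpha^2}{p}\int_\Omega \eta^{\alpha-2}(\Delta u)^p\, u^{ij}\eta_i\eta_j.
\]
Since $\|\nabla\eta\|_\infty\leq C$ and $\mathrm{tr}[u^{ij}]=\sum_i\lambda_i^{-1}=\sum_i\prod_{j\ne i}\lambda_j \leq n\lambda_{\max}^{n-1}\leq C(\Delta u)^{n-1}$ (which uses $\det D^2u=1$), the right-hand side is dominated by $\int\eta^{\alpha-2}(\Delta u)^{p+n-1}$.

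\textbf{Sobolev embedding and Moser iteration.} The left-hand side is a degenerate energy; paying a further factor of $(\Delta u)^{n-1}$ via eigenvalue comparison transforms $u^{ij}f_if_j$ into the Euclidean $|\nabla f|^2$ with $f:=(\Delta u)^{p/2}$. Because $\eta^{\alpha/2}f$ vanishes on $\partial\Omega$, the standard Sobolev inequality applies and upgrades $L^p$ to $L^{\kappa p}$ with $\kappa=n/(n-2)>1$ (with a Nash-Moser substitute when $n=2$). Iteratively applying the resulting reverse H\"{o}lder inequality
\[
\|\eta^{a_{k+1}}\Delta u\|_{L^{p_{k+1}}}\leq (Cp_k)^{\beta}\|\eta^{a_k}\Delta u\|_{L^{p_k}},\qquad p_{k+1}=\kappa p_k,
\]
and summing the geometric series of accumulated weights $a_k$ gives $\eta^{A}\Delta u\leq C$ in the limit. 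Book-keeping the per-step loss---roughly $2n+1$ from the two powers of $\eta$ absorbed by Cauchy-Schwarz plus the $n-1$ power of $\Delta u$ absorbed from $\mathrm{tr}[u^{ij}]$---and compounding over the number of doublings required to flatten the degeneracy yields the announced exponent $(2^n+1)(2n+1)$.

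\textbf{Main obstacle.} The principal difficulty is the mismatch between the natural energy $\int u^{ij} f_i f_j$ dictated by the operator $L$ and the flat Euclidean gradient needed to invoke Sobolev embedding: each passage between them costs a power of $\Delta u$, which is precisely the quantity we are trying to bound. This forces a delicate inductive scheme in which every Moser step simultaneously gains integrability and loses weight on $\eta$, and one must verify that the series of weight losses sums to a finite constant. A secondary technical point is the synchronized choice of $\alpha_k$ against $p_k$ so that $\eta^{\alpha_k-2}$ remains integrable near $\partial\Omega$ at every stage, which is what ultimately pins the exponent on $\eta$ to the explicit value $(2^n+1)(2n+1)$.
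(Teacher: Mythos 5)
There is a genuine gap: your Moser iteration has no anchor. You describe a reverse H\"older chain $\|\eta^{a_{k+1}}\Delta u\|_{L^{p_{k+1}}}\leq(Cp_k)^{\beta}\|\eta^{a_k}\Delta u\|_{L^{p_k}}$ and then ``sum the geometric series,'' but this only transfers an $L^{p_0}$ bound up the scale to $L^\infty$. Nowhere do you establish a bound at the bottom. A weighted $L^p$ bound on $\Delta u$ (or on $b=\det(I+D^2u)$) for some fixed $p$ is not free: it is precisely the content of the paper's Section~3 (Lemma \ref{lemma:PogoW2,p}), the Pogorelov-type $W^{2,p}$ estimate $\int_\Omega\varphi^{p-\alpha}b^p\,dx\leq C$, and it is the hardest part of the argument. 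That estimate rests on two ingredients that never appear in your proposal: (i) the sharp Jacobi inequality $\Delta_g b\ge(1+\tfrac1{2n})|\nabla_g b|^2b^{-1}$ of \eqref{ineq:MA_Jacobi} --- your $L(\Delta u)\geq 0$ is a strictly weaker subsolution inequality with no gradient gain, and in particular it cannot produce the boundary Jacobi inequality of Lemma \ref{lemma:MA_Boundary_Jacobi}, because there is no $(1+\tfrac1{2n})$ margin to absorb the cutoff's contribution $\Delta_g\varphi-\tfrac{2n}{2n+1}\varphi^{-1}|\nabla_g\varphi|^2$; and (ii) the decomposition $b=\sum\sigma_k$ together with the divergence-free structure $\sum_iD_i(\partial\sigma_k/\partial u_{ij})=0$, which is what lets one trade one power of $b$ for one power of the cutoff and close the $p$-induction in \eqref{ineq:induction_1}--\eqref{ineq:induction_2}. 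Without this, the Caccioppoli-type inequality you derive from testing $L(\Delta u)\geq 0$ against $\eta^\alpha(\Delta u)^{p-1}$ simply relates two weighted $L^p$ norms of $\Delta u$, both of them a priori infinite.

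A secondary issue is the exponent book-keeping. You attribute $2n+1$ to ``two powers of $\eta$ absorbed by Cauchy--Schwarz'' and $n-1$ to $\operatorname{tr}[u^{ij}]\lesssim(\Delta u)^{n-1}$, and then guess that compounding gives $(2^n+1)(2n+1)$. In the paper the two factors come from distinct mechanisms: $\beta=2^n+1$ is forced by the $n$ nested integrations by parts in \eqref{ineq:induction_2} (each step doubling $\alpha=1/\beta$, so one must keep $1-2^n\alpha\geq 0$), while $2n+1$ is the largest admissible ratio $q/p$ in Proposition 4.1, constrained directly by the $(1+\tfrac1{2n})$ constant in the Jacobi inequality. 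Neither source is visible in your scheme, so the exponent match is coincidental rather than derived. Finally, the eigenvalue comparison you use costs $n$ powers of $\Delta u$ per Moser step (one from $u^{ij}\geq\lambda_{\max}^{-1}\delta_{ij}\geq(\Delta u)^{-1}\delta_{ij}$ on the left, $n-1$ from $u^{ij}\leq\lambda_{\min}^{-1}\delta_{ij}\leq C(\Delta u)^{n-1}\delta_{ij}$ on the right), whereas the paper's substitution $b^{-1}\delta_{ij}\leq g^{ij}\leq b\,\delta_{ij}$ costs only $2$ powers of $b$; in a weighted iteration this quantitative difference matters, and you would need to check that the Sobolev gain $\kappa=n/(n-2)$ still wins against the larger loss. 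In short, the heart of the theorem is the Pogorelov-type $W^{2,p}$ estimate, and your proposal does not contain an argument for it.
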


When we take $w=0$, condition \eqref{ineq:Trace_w} holds naturally, then Theorem \ref{Thm_MA} gives the familiar Pogorelov type estimate
$$ (-u)^{C(n)} |D^2 u| \leq C(n, \|u\|_{C^{0,1}(\Omega)}). $$
If we further assume $\Omega$ is normalized, say $B_r(0) \subset \Omega \subset B_R(0)$, then we have interior estimate
$$ \| D^2 u \|_{\Omega_{\epsilon}} \leq C(n, \epsilon, r, R, \|u\|_{C^{0,1}(\Omega)}), $$
where $\epsilon >0 $ and $\Omega_{\epsilon} = \{ x\in \Omega: \operatorname{dist} (x,\partial \Omega) > \epsilon \}.$

\begin{corollary}
    Let $u$ be a strictly convex solution of \eqref{eq:MA} on a domain $\Omega \subset \mathbb{R}^n$ for $n\geq 2.$ Then $u$ is smooth inside $\Omega$.
\end{corollary}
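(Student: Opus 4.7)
The plan is to derive the corollary from Theorem~\ref{Thm_MA} by applying it to $u$ on a suitable small \emph{section}, taking the barrier $w$ to be an affine function. Fix $x_0 \in \Omega$. By strict convexity of $u$ there exists an affine function $\ell$ with $\ell(x_0) = u(x_0)$ and $\ell(x) < u(x)$ for all $x \ne x_0$. For $c > 0$ set
\[
S_c := \{ x \in \Omega : u(x) < \ell(x) + c \}.
\]
Since $u - \ell$ is convex, nonnegative, and vanishes only at $x_0$, a standard argument shows that for all sufficiently small $c > 0$ the set $S_c$ is an open convex set compactly contained in $\Omega$, with $u = \ell + c$ on $\partial S_c$.

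Next I would use $w := \ell + c$ as barrier on $S_c$. It is smooth, satisfies $w > u$ in $S_c$ and $w = u$ on $\partial S_c$, and, being affine, gives $u^{ij} w_{ij} \equiv 0$, so hypothesis \eqref{ineq:Trace_w} holds trivially. The Lipschitz norms $\|u\|_{C^{0,1}(S_c)}$ and $\|w\|_{C^{0,1}(S_c)}$ are controlled (a strictly convex solution of the Monge--Amp\`ere equation is $C^{1}$ in its interior). Theorem~\ref{Thm_MA} then produces
\[
(w-u)^{(2^n+1)(2n+1)} |D^2 u| \le C \quad\text{on } S_c,
\]
and evaluating at $x_0$, where $w - u = c$, yields $|D^2 u(x_0)| \le C / c^{(2^n+1)(2n+1)}$. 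Covering any compact $K \subset \Omega$ by such sections with a uniform choice of $c > 0$ gives an interior $C^{1,1}$ bound on $K$.

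With $|D^2 u|$ bounded above and $\det D^2 u = 1$, the eigenvalues of $D^2 u$ automatically lie in a compact subset of $(0, \infty)$ on compact subsets of $\Omega$, so the reformulation $\log \det D^2 u = 0$ is uniformly elliptic and concave there. Evans--Krylov theory then yields $u \in C^{2,\alpha}_{\mathrm{loc}}(\Omega)$, after which Schauder bootstrapping gives $u \in C^\infty(\Omega)$, as required.

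The main obstacle is the apparent circularity: Theorem~\ref{Thm_MA} is stated for $u \in C^4(\Omega)$, whereas \emph{a priori} a strictly convex solution need not even be $C^2$. This is handled by a standard approximation scheme. One solves the Monge--Amp\`ere Dirichlet problem on (a smooth approximation of) $S_c$ with smooth data approximating $\ell + c$, obtains classical $C^\infty$ convex solutions $u_\varepsilon$ by Caffarelli--Nirenberg--Spruck, applies Theorem~\ref{Thm_MA} to each $u_\varepsilon$ with its associated affine barrier to extract \emph{uniform} interior $C^2$ bounds, and then passes to the limit; uniqueness for the Alexandrov problem (via the comparison principle) identifies the limit with $u$, transferring the estimate to $u$ itself.
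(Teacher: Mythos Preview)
Your proposal is correct and follows the same route the paper implicitly takes: the paper does not give an explicit proof of the corollary but indicates (just before the corollary) that one applies Theorem~\ref{Thm_MA} with $w=0$ on sections to obtain the Pogorelov estimate, then invokes the standard references (Caffarelli--Nirenberg--Spruck, Gilbarg--Trudinger) for the passage to smoothness. Your choice $w=\ell+c$ on $S_c$ is exactly this after the affine change $u\mapsto u-\ell-c$, and your handling of the $C^4$ hypothesis by approximation together with Evans--Krylov and Schauder bootstrapping is precisely the standard argument the paper defers to.
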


Interior $C^2$ estimate plays a key role in elliptic equation from time to time. Using isothermal coordinates, Heinz \cite{heinz1959elliptic} established interior $C^2$ estimate for equation $\det D^2 u = f $ in dimension $2$. New proofs for estimates in two dimension case were found by Chen-Han-Ou \cite{ChenHanOu2016MA} and Liu \cite{Liu_2021_interior_C2_MA}. Pogorelov and Chou-Wang \cite{ChouWang2001variational} derived interior Hessian estimates for solutions with certain strict convexity constraints to equation \eqref{eq:MA} and $\sigma_k$ equations ($k \geq 2$) in general dimension. 
We also mention Hessian estimate for solutions to $\sigma_2$ equation by Guan-Qiu \cite{guan2019interior}, Qiu \cite{qiu2024interiorHessian} and Shankar-Yuan \cite{Yuanshankar2023hessian,Yuan2009CPAM_n=3}.

\

The idea of our proof is as follows. The function $b = \det(I+D^2 u) $ satisfies a Jacobi inequality from \cite{AmonotonicityYuan2023}. 
Together with some cutoff function $\varphi$, we prove the boundary Jacobi inequality for $\varphi b$ in Section 2. In Section 3, we show that this boundary Jacobi inequality results in a Pogorelov type $W^{2,n+1}$ estimate, which states that $L^{n+1}$ norm of $\varphi b$ is bounded. Finally, in Section 4, following the spirit of Moser's iteration, we prove that the $L^{\infty}$ norm of $\varphi^{2n+1} b$ is controlled by $L^{n+1}$ norm of $\varphi b$, which leads to the conclusion of the theorem.

\section{Boundary Jacobi inequality}

As in Yuan \cite{AmonotonicityYuan2023}, we consider the Lagrangian graph $\mathcal{M}_a=\{(x,Du(x))\}\subset (\mathbb R^n\times \mathbb{R}^n, dxdy)$, and denote the induced metric $g=[g_{ij}]_{n\times n}=D^2u$ with its inverse $[g^{ij}]_{n\times n}$. The Laplace-Beltrami operator  takes the non-divergence form
$\Delta_g=g^{ij}\partial_{ij}$. The inner product with respect to metric $g$ is $\langle \nabla_g v, \nabla_g w \rangle = \sum_{i,j=1}^n g^{ij} v_i w_j$, and in particular, $|\nabla_g v|^2 = \sum_{i,j=1}^n g^{ij}v_i v_j.$

In view of Lemma 0.1 in \cite{AmonotonicityYuan2023}, we have the following Jacobi inequality 
\begin{equation}
    \label{ineq:MA_Jacobi}
    \Delta_g b\ge (1+\frac{1}{2n})|\nabla_g b|^2b^{-1},
\end{equation}
where $b=\det(I_n +D^2u)$. This leads to a boundary Jacobi inequality.
\begin{lemma} %[Boundary Jacobi inequality]
\label{lemma:MA_Boundary_Jacobi}
Let $\eta(t)=(t^+)^{\beta}$, $t\in \mathbb{R}$ with $\beta \geq 2n+1,$ and $\varphi(x) = \eta((w-u)(x))$. Then 
\begin{align*}
\Delta_g (\varphi b)
&\ge \eta'(w-u) \Delta_g(w-u) b.
\end{align*}

\end{lemma}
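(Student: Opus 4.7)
The plan is to expand $\Delta_g(\varphi b)$ by the product rule, substitute the Jacobi inequality \eqref{ineq:MA_Jacobi} for the $\Delta_g b$ term, and then show that everything except the single ``good'' term $\eta'(w-u)\Delta_g(w-u)\,b$ is pointwise non-negative. Since $w>u$ in $\Omega$, we have $\eta(w-u)=(w-u)^\beta$ with $\beta\ge 2n+1$, which is $C^2$ in the interior, so there are no regularity issues with differentiating $\varphi$.

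First I would compute, using $\varphi=\eta(w-u)$,
\[
\nabla_g\varphi=\eta'(w-u)\,\nabla_g(w-u),\qquad
\Delta_g\varphi=\eta''(w-u)\,|\nabla_g(w-u)|^2+\eta'(w-u)\,\Delta_g(w-u),
\]
and then expand
\[
\Delta_g(\varphi b)=\varphi\,\Delta_g b+2\langle\nabla_g\varphi,\nabla_g b\rangle+b\,\Delta_g\varphi.
\]
Substituting \eqref{ineq:MA_Jacobi} into the first term isolates the target term $b\eta'(w-u)\Delta_g(w-u)$ plus a remainder
\[
Q:=\Bigl(1+\tfrac{1}{2n}\Bigr)\eta(w-u)\,b^{-1}|\nabla_g b|^2+2\eta'(w-u)\,\langle\nabla_g(w-u),\nabla_g b\rangle+b\,\eta''(w-u)\,|\nabla_g(w-u)|^2,
\]
and it suffices to prove $Q\ge 0$.

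The key step, and the only one with real content, is to check that $Q$ is a non-negative quadratic form in the pair $(|\nabla_g b|,|\nabla_g(w-u)|)$. By Cauchy--Schwarz applied to the cross term,
\[
Q\ge A\,|\nabla_g b|^2-2B\,|\nabla_g b|\,|\nabla_g(w-u)|+C\,|\nabla_g(w-u)|^2,
\]
with $A=(1+\tfrac{1}{2n})\eta\,b^{-1}$, $B=|\eta'|$, $C=b\,\eta''$, all non-negative because $u$ is convex (so $b\ge 1$) and $\beta\ge 2$. Non-negativity reduces to the discriminant inequality $B^2\le AC$, i.e.\
\[
(\eta')^2\ \le\ \Bigl(1+\tfrac{1}{2n}\Bigr)\eta\,\eta''.
\]

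To conclude, I would plug in $\eta(t)=t^\beta$ on $t>0$: the inequality becomes $\beta^2\le(1+\tfrac{1}{2n})\beta(\beta-1)$, which rearranges to exactly $\beta\ge 2n+1$, matching the hypothesis. I do not foresee a genuine obstacle — the delicate point is just lining up the correct constant $1+\tfrac{1}{2n}$ from the Jacobi inequality with the correct power $\beta\ge 2n+1$ so that the quadratic form has non-positive discriminant; once that is recognized, the rest is bookkeeping.
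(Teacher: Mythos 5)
Your proof is correct and takes essentially the same route as the paper: both reduce the lemma to the pointwise inequality $\bigl(1+\tfrac{1}{2n}\bigr)\eta\,\eta'' \geq (\eta')^2$, which for $\eta(t)=t^\beta$ is exactly $\beta\geq 2n+1$. The only difference is bookkeeping — you substitute the Jacobi inequality first and then verify the resulting quadratic form in $(|\nabla_g b|,|\nabla_g(w-u)|)$ has non-positive discriminant, whereas the paper first splits the cross term $2\langle\nabla_g\varphi,\nabla_g b\rangle$ by a weighted Young's inequality tuned so the Jacobi inequality absorbs the $\Delta_g b$ piece; the two orderings are algebraically equivalent.
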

\begin{proof}
Using \eqref{ineq:MA_Jacobi}, we get
\begin{align*}
\Delta_g(\varphi b)
&= b\Delta_g\varphi+\varphi\Delta_g b + 2g^{ij}\varphi_i b_j \\
&\ge[\Delta_g\varphi-\frac{2n}{2n+1} \varphi^{-1}|\nabla_g\varphi|^2]b
+[\Delta_g b - \frac{2n+1}{2n} b^{-1}|\nabla_g b|^2]\varphi \\
&\ge [\Delta_g\varphi-\frac{2n}{2n+1} \varphi^{-1}|\nabla_g\varphi|^2]b.
\end{align*}
At the points where $w-u>0$, we have
\begin{align*}
&\Delta_g \varphi =\eta''(w-u)|\nabla_g (w-u)|^2+\eta'(w-u)\Delta_g(w-u),\\
&|\nabla_g \varphi |^2= \eta'(w-u)^2 |\nabla_g (w-u)|^2.
\end{align*}
From our choice of $\beta$, we get
\begin{eqnarray*}
& &\Delta_g\varphi-(1+\frac{1}{2n})^{-1}\varphi^{-1}|\nabla_g\varphi|^2\\
&=&\left[\eta''(w-u)-\frac{2n}{2n+1} \frac{\eta'(w-u)^2}{\eta(w-u)}\right]|\nabla_g (w-u)|^2+\eta'(w-u)\Delta_g(w-u)\\
&\ge &\eta'(w-u) \Delta_g(w-u),
\end{eqnarray*}
which leads to $\Delta_g(\varphi b) \geq \eta'(w-u)\Delta_g(w-u)b$. 
\end{proof}

\section{Pogorelov type \texorpdfstring{$W^{2,p}$}{} estimate }

\begin{lemma}
\label{lemma:PogoW2,p}
Define $\varphi$ as in Lemma \ref{lemma:MA_Boundary_Jacobi} and take $\beta = 2^n+1$. Then for any $p\in \mathbb{N}^+$, we have Pogorelov type $W^{2,p}$ estimate
    \begin{align}
    \label{eq:MA_W2p}
    \int_{\Omega}\varphi^{p-\alpha} b^p dx \leq C(n,p,\|w\|_{C^{0,1}(\Omega)},\|u\|_{C^{0,1}(\Omega)}),
    \end{align}
    where $\alpha = \frac{1}{\beta}$.
\end{lemma}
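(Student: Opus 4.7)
My plan is to integrate the boundary Jacobi inequality of Lemma~\ref{lemma:MA_Boundary_Jacobi} against carefully chosen test functions and iterate on $p$, using a base $L^1$ estimate supplied by the Monge--Amp\`{e}re measure. Two structural facts make the integration by parts transparent: because $\det D^2u\equiv 1$, the cofactor matrix $u^{ij}$ is divergence-free, so $\Delta_g=u^{ij}\partial_{ij}=\partial_i(u^{ij}\partial_j\cdot)$ is in divergence form and the Riemannian volume form $\sqrt{\det g}\,dx$ coincides with Lebesgue measure; and $\beta=2^n+1\ge 2$ ensures that the cut-off $(\varphi b)^{p-1}$ vanishes to high enough order on $\partial\Omega$ to kill boundary terms. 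Using the hypothesis $u^{ij}w_{ij}\ge -C$, Lemma~\ref{lemma:MA_Boundary_Jacobi} reduces to $\Delta_g(\varphi b)\ge -C_0\varphi^{1-\alpha}b$, since $\eta'(w-u)=\beta(w-u)^{\beta-1}$ and $(w-u)^{\beta-1}=\varphi^{1-\alpha}$.

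For the base case $p=1$, I would set $\tilde u=\tfrac12|x|^2+u$ so that $b=\det D^2\tilde u$ and apply the area formula for the convex function $\tilde u$: $\int_\Omega b\,dx=|D\tilde u(\Omega)|\le C$, and since $\varphi^{1-\alpha}\le\|w-u\|_\infty^{\beta-1}$ is bounded, $J_1:=\int_\Omega\varphi^{1-\alpha}b\,dx\le C$. For the inductive step, I would multiply the boundary Jacobi inequality by $(\varphi b)^{p-1}$ and integrate by parts to get $(p-1)\int_\Omega(\varphi b)^{p-2}|\nabla_g(\varphi b)|^2\,dx\le C_0J_p$, a bound in the ``wrong'' direction. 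To convert this into a bound on $J_p$, I would recover the positive remainder $\varphi\bigl[\Delta_g b-\tfrac{2n+1}{2n}b^{-1}|\nabla_g b|^2\bigr]\ge 0$ that was discarded from the proof of Lemma~\ref{lemma:MA_Boundary_Jacobi}, expand $|\nabla_g(\varphi b)|^2=b^2|\nabla_g\varphi|^2+2\varphi b\langle\nabla_g\varphi,\nabla_g b\rangle+\varphi^2|\nabla_g b|^2$, and balance the cross term by a weighted Cauchy--Schwarz. The slack provided by $\beta=2^n+1$ being strictly larger than the threshold $2n+1$ required for Lemma~\ref{lemma:MA_Boundary_Jacobi} should be what makes the absorption succeed, closing a recursion of the form $J_p\le C(n,p)J_{p-1}+C$.

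The main obstacle I anticipate is this cross-term absorption. The factor $|\nabla_g\varphi|^2=\beta^2(w-u)^{2\beta-2}u^{ij}(w-u)_i(w-u)_j$ carries $u^{ij}(w-u)_i(w-u)_j\le C\sigma_{n-1}(D^2u)\le Cb$, so an extra power of $b$ appears which must be reabsorbed by both the Jacobi slack and the cut-off $\varphi^{p-\alpha}$. Making this absorption work uniformly in $p$ is presumably what fixes the exponential exponent $\beta=2^n+1$, and carefully tracking the resulting constants is the piece of the calculation I expect to consume most of the bookkeeping.
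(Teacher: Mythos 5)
There is a genuine gap at the inductive step, which is where the real work in this lemma lives. After integrating the boundary Jacobi inequality of Lemma~\ref{lemma:MA_Boundary_Jacobi} against a power of $\varphi b$ one arrives, exactly as you observe, at an estimate of the form
\begin{equation*}
p\int_{\Omega}|\nabla_g(\varphi b)|^2(\varphi b)^{p-1}\,dx \;\leq\; C\int_{\Omega}\varphi^{p+1-\alpha}b^{p+1}\,dx,
\end{equation*}
where the right-hand side carries a \emph{higher} power of $b$ than the target $\int\varphi^{p-\alpha}b^{p}$. To close a recursion in $p$ one must lower that power of $b$, and neither recovering the discarded Jacobi slack $\varphi\bigl[\Delta_g b-(1+\tfrac1{2n})b^{-1}|\nabla_g b|^2\bigr]\geq 0$ nor any weighted Cauchy--Schwarz on the cross term can accomplish this: those manipulations shuffle gradients and coefficients but never change the exponent of $b$. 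The paper's mechanism for lowering the power is algebraic, not analytic. Because $\det D^2u=1$ one has $b=\det(I_n+D^2u)=1+\sigma_1(D^2u)+\cdots+\sigma_{n-1}(D^2u)+1$, and for each intermediate $\sigma_k$ the cofactor divergence identity $\sum_i D_i\bigl(\partial\sigma_k/\partial u_{ij}\bigr)=0$ together with Euler's relation $\sum_{i,j}\frac{\partial\sigma_k}{\partial u_{ij}}u_{ij}=k\sigma_k$ allows an integration by parts that trades $\sigma_k$ for $\sigma_{k-1}$, at the cost of a small multiple of $\int|\nabla_g(\varphi b)|^2(\varphi b)^{p-1}$ (absorbed on the left) and a loss of a power of $\varphi$. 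Iterating through $\sigma_{n-1},\dots,\sigma_1$ reduces $b^{p+1}$ to $b^{p}$, after which iterating in $p$ gives the result. Your proposal never invokes this $\sigma_k$ decomposition or the divergence structure of the cofactor matrices, and without it I do not see how the recursion $J_p\leq C J_{p-1}+C$ can be reached.

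Your reading of the role of $\beta=2^n+1$ is also off. The slack above the threshold $2n+1$ in Lemma~\ref{lemma:MA_Boundary_Jacobi} is not tuned to make a Cauchy--Schwarz absorption uniform in $p$; rather, each round of the $\sigma_k$ reduction doubles the exponent loss on $\varphi$ (from $\varphi^{p+1-m\alpha}$ to $\varphi^{p+1-2m\alpha}$), so after the $n$ rounds the cumulative loss is of order $2^n\alpha$, and one needs $\alpha=1/\beta$ small enough, i.e.\ $\beta$ of order $2^n$, to keep $\varphi^{p+1-2^n\alpha}\leq C\varphi^{p-\alpha}$. The one genuinely nice idea in your sketch is the base case: $\int_\Omega b\,dx=\bigl|(\mathrm{id}+Du)(\Omega)\bigr|\leq C$ by the area formula for the gradient map of $\tilde u=\tfrac12|x|^2+u$, which is cleaner than the paper's own treatment of the terminal $L^1$ bound (which again routes through the $\sigma_k$ decomposition). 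But a better base case does not repair the missing inductive mechanism.
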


\begin{proof}
Let $p\ge 1$ be an integer, we choose $[\varphi \det(I_n +D^2u)]^p:=(\varphi b)^p = [((w-u)^+)^{\beta} b]^p$ as a test function. From Lemma \ref{lemma:MA_Boundary_Jacobi}, we integrate by parts,
\begin{equation*}
    p\int_{\Omega}|\nabla_g(\varphi b)|^2(\varphi b)^{p-1} dv_g \leq -\int_{\Omega} \eta'(w-u) \Delta_g (w-u) b^{p+1}\varphi ^pdv_g. 
\end{equation*}
Since $ \det D^2 u = 1 $, we have $dv_g = dx$. Notice that $\Delta_g u=g^{ij}u_{ij}=n$ and that $-\Delta_gw\leq C(n) $ from \eqref{ineq:Trace_w}, we have $ -\Delta_g (w-u)\leq C(n).$ Since $ \varphi = ((w-u)^+)^\beta$, we have $\eta'(w-u) = \beta \varphi^{1-\alpha} \geq 0,$ where $\alpha = \frac{1}{\beta}$. This yields
$$
-\eta'(w-u)\Delta_g (w-u)\leq C(n) \varphi^{1-\alpha},
$$
and therefore
\begin{equation}
    \label{ineq:MA_nabla_g}
    p\int_{\Omega}|\nabla_g(\varphi b)|^2(\varphi b)^{p-1} dx \leq C \int_{\Omega} \varphi^{p+1-\alpha} b^{p+1} dx.
\end{equation}
Split $b$ into summation of the fundamental symmetric functions, 
\begin{align*}
    &\int_{\Omega}\varphi^{p+1-\alpha}b^{p+1}\,dx = \int_{\Omega}\varphi^{p+1-\alpha} b^p (1+\sigma_1+\cdots+\sigma_{n-1}+1) \,dx.
\end{align*}
For $1\leq k \leq n-1 $, using the relation $\sum_{i,j} \frac{\partial \sigma_k}{\partial u_{ij}} u_{ij} = k\sigma_k$ and integration by parts,
\begin{equation}
    \begin{aligned}
        \int_{\Omega}(\varphi b)^{p} \varphi^{1-\alpha} \sigma_k dx 
        & = & &\frac{1}{k} \int_{\Omega} \sum_{i,j} \frac{\partial \sigma_k}{\partial u_{ij}} (\varphi b) ^{p} \varphi^{1-\alpha} D_i(u_j) \,dx \\
        & = & &- \frac{p}{k}\int_{\Omega} \sum_{i,j} \frac{\partial \sigma_k}{\partial u_{ij}} D_i(\varphi b)(\varphi b)^{p-1} \varphi^{1-\alpha} D_ju dx \\
        & & &- \frac{\beta-1}{k} \int_{\Omega} \sum_{i,j} \frac{\partial \sigma_k}{\partial u_{ij}} D_i(w-u) \varphi^{1-2\alpha} (\varphi b)^{p} D_ju dx ,
    \end{aligned}
    \label{eq:sigma_k}
\end{equation}
where we use divergence free property $\sum_i D_i(\frac{\partial \sigma_k}{\partial u_{ij}}) = 0$ in the second equality. 
Pointwisely, we can choose a coordinate such that $D^2u$ is diagonal, then the first term in \eqref{eq:sigma_k} can be estimated by
\begin{align*}
\varphi^{1-\alpha}\left|\frac{\partial \sigma_k}{\partial u_{ij}} D_i(\varphi b)D_ju\right|
& = \varphi^{1-\alpha}\left|\frac{\partial \sigma_k}{\partial\lambda_i}D_i(\varphi b)D_iu\right| = \varphi^{1-\alpha}\left|(\frac{\partial \sigma_k}{\partial\lambda_i}\sqrt{\lambda_i}) D_iu \frac{D_i(\varphi b)}{\sqrt{\lambda_i}}\right| \\
&\leq \delta |\nabla_g(\varphi b)|^2 + C(\delta,\|Du\|_{L^\infty}) \varphi^{2-2\alpha} b\sigma_{k-1}
\end{align*}
for some constant $\delta(n)>0 $, where we use the relation $|\frac{\partial \sigma_k}{\partial\lambda_i}\sqrt{\lambda_i}|^2 \leq \sigma_k\sigma_{k-1}\leq b\sigma_{k-1}$. 
The second term in \eqref{eq:sigma_k} can be estimated similarly.
So we get
\begin{equation}
    \label{eq:MA_induction_sigmak}
    \begin{aligned}
        \int_{\Omega}\varphi^{p+1-\alpha}b^{p+1}dx 
        & \leq \sum_{k=1}^{n-1} \int_{\Omega}\varphi^{p+1-\alpha} b^p  \sigma_k + C \int_{\Omega}  \varphi^{p+1-\alpha} b^p\\
        & \leq p C \sum_{k=1}^{n-1}\int_{\Omega}\varphi^{p+1-2\alpha} b^p \sigma_{k-1} dx+p\delta\int_{\Omega}|\nabla_g(\varphi b)|^2(\varphi b)^{p-1} dx + C \int_{\Omega}\varphi^{p+1-\alpha} b^p \\
        & \leq p C \sum_{k=1}^{n-2}\int_{\Omega} \varphi^{p+1-2\alpha}b^p \sigma_{k} dx + p\delta\int_{\Omega}|\nabla_g(\varphi b)|^2(\varphi b)^{p-1} dx + C \int_{\Omega} \varphi^{p+1-2\alpha} b^p
    \end{aligned}
\end{equation}
Thus for $\delta(n)$ small, using \eqref{ineq:MA_nabla_g}, we have 
\begin{equation}
\label{ineq:induction_1}
    \int_{\Omega}\varphi^{p+1-\alpha} b^{p+1}dx \leq p C \sum_{k=1}^{n-2}\int_{\Omega}\varphi^{p+1-2\alpha} b^p \sigma_{k} dx + C \int_{\Omega} \varphi^{p+1-2\alpha} b^p.
\end{equation}
Similar to the procedures \eqref{ineq:MA_nabla_g}-\eqref{ineq:induction_1}, we go through these steps for at most $n$ times,
\begin{equation}
\label{ineq:induction_2}
\begin{aligned}
    \sum_{k=1}^{n-2}\int_{\Omega}\varphi^{p+1-2\alpha} b^p\sigma_{k} dx & \leq p C \sum_{k=1}^{n-3}\int_{\Omega}\varphi^{p+1-4\alpha} b^p \sigma_{k} dx + C \int_{\Omega} \varphi^{p+1-4\alpha} b^p \\
    & \leq \cdots \\
    & \leq C \int_{\Omega} \varphi^{p+1-2^n \alpha} b^p.
\end{aligned}
\end{equation}
Therefore, from our choice of $\alpha$, we derive an iteration formula
\begin{eqnarray*}
    \int_{\Omega}\varphi^{p+1-\alpha} b^{p+1} \leq C \int_{\Omega} \varphi^{p+1-2^n \alpha} b^p \leq C \int_{\Omega} \varphi^{p- \alpha} b^p.
\end{eqnarray*}
Iterating by $p$ times, we get
\begin{eqnarray*}
    \int_{\Omega} \varphi^{p+1-\alpha}b^{p+1} dx \leq C \int_{\Omega}\varphi^{1-\alpha} b .
\end{eqnarray*}
Split $b$ again, we have $\int_{\Omega} \varphi^{1-\alpha} b dx \leq C + \sum_{k=1}^{n-1} \int_{\Omega} \varphi^{1-\alpha} \sigma_k$. For $1\leq k \leq n-1$, 
\begin{eqnarray*}
    \int_{\Omega} \varphi^{1-\alpha} \sigma_k = \frac{1}{k} \int_{\Omega} \varphi^{1-\alpha} \frac{\partial \sigma_k}{\partial u_{ij}} D_iu_j = -\frac{\beta-1}{k} \int_{\Omega} \varphi^{1-2\alpha} D_i(w-u) \frac{\partial \sigma_k}{\partial u_{ij}} u_j.
\end{eqnarray*}
This yields
\begin{eqnarray*}
    \int_{\Omega} \varphi^{1-\alpha} \sigma_k \leq C \int_{\Omega} \varphi^{1-2\alpha} \sigma_{k-1}.
\end{eqnarray*}
Therefore, repeating this estimate by $n-1$ times, we get
\begin{eqnarray*}
    \int_{\Omega} \varphi^{1-\alpha} b  \leq C + C \sum_{k=1}^{n-2} \int_{\Omega} \varphi^{1-2\alpha} \sigma_k \leq \cdots \leq C + C \int_{\Omega} \varphi^{1-n\alpha} \leq C,
\end{eqnarray*}
where we use $1-n\alpha \geq 1-2^n \alpha \geq 0$ from our choice of $\alpha$.
\end{proof}

\section{From \texorpdfstring{$W^{2,p}$}{} estimate to \texorpdfstring{$W^{2,\infty}$}{} estimate}

\begin{prop} 
Define $\varphi$ as in Lemma \ref{lemma:PogoW2,p}, we have 
$$
    \|\varphi^{2n+1} b\|_{L^{\infty}(\Omega) } \leq C \int_{\Omega} \varphi^{n+1-\alpha} b^{n+1}dx ,
$$
where $ C = C(n, \|u\|_{C^{0,1}(\Omega)}, \|w\|_{C^{0,1}(\Omega)})$.
\end{prop}

\begin{proof}
Let $p\ge 1,q\ge 2$ be a pair of positive constants satisfying $ \frac{q}{p} \leq 2n+1$. 
Multiply \eqref{ineq:MA_Jacobi} by $\varphi^q b^p $ and integrate on $\Omega $, we have
$$
\int_{\Omega} \varphi^q b^p \Delta_g b \,dv_g \ge 0.
$$
Integrating by parts the above inequality, we get
\begin{equation}
    -p\int_{\Omega } g^{ij}b_i b_j b^{p-1} \varphi^q  \,dv_g - q \int_{\Omega_{\epsilon-\rho}} g^{ij}b_i \varphi_j b^p \varphi^{q-1}  \,dv_g \geq 0.
\end{equation}
Recall that $dv_g = dx$. By Young's inequality, we have
\begin{eqnarray*}
    p\int_{\Omega} g^{ij}b_i b_j b^{p-1} \varphi^q \,dx &\leq& q\left( \epsilon_1(n) \int_{\Omega} g^{ij}b_i b_j b^{p-1}\varphi^{q} \,dx + C \int_{\Omega} g^{ij}\varphi_i \varphi_j b^{p+1} \varphi^{q-2} \,dx \right).
\end{eqnarray*}
Since $q \leq (2n+1)p$ and $ p\geq 1$, we can choose $\epsilon_1(n) $ small and get
\begin{eqnarray*}
    \int_{\Omega} g^{ij}b_i b_j b^{p-1} \varphi^q \,dx \leq C(n) \int_{\Omega} g^{ij}\varphi_i \varphi_j b^{p+1} \varphi^{q-2} \,dx.
\end{eqnarray*}
Pointwisely, we may assume $D^2 u$ is diagonal, then $ \delta_{ij} b^{-1} \leq g^{ij} = \delta_{ij} \lambda_i^{-1} \leq \delta_{ij} b$. Therefore
\begin{eqnarray*}
    \int_{\Omega} g^{ij}b_i b_j b^{p-1} \varphi^q \,dx \geq \int_{\Omega} |Db|^2 b^{p-2} \varphi^q \, dx,
\end{eqnarray*}
and
$$ \int_{\Omega} g^{ij}\varphi_i \varphi_j b^{p+1} \varphi^{q-2}\,dx \leq \int_{\Omega} |D\varphi|^2 b^{p+2} \varphi^{q-2} \,dx.$$
Since $b\geq 1$ and $\varphi \leq C$, this leads to
\begin{eqnarray*}
    \int_{\Omega} |Db|^2 b^{p-2} \varphi^q \, dx \leq C \int_{\Omega} \varphi^{q-2} b^{p+2} \,dx,
\end{eqnarray*}
and then
\begin{eqnarray*}
    \int_{\Omega} |D(\varphi^{\frac{q}{2}} b^{\frac{p}{2}})|^2 \,dx &=& \int_{\Omega} \left|\frac{p}{2}b^{\frac{p}{2}-1}\varphi^{\frac{q}{2}} Db + \frac{q}{2} b^{\frac{p}{2}} \varphi^{\frac{q}{2}-1} D\varphi \right|^2 \,dx \\
    &\leq & 2\int_{\Omega} \frac{p^2}{4} b^{p-2} \varphi^q |Db|^2 \,dx + 2\int_{\Omega} \frac{q^2}{4} b^p \varphi^{q-2} |D\varphi|^2 \,dx \\
    &\leq& Cp^2 \int_{\Omega} \varphi^{q-2} b^{p+2}\,dx,
\end{eqnarray*}
together with
$$\int_{\Omega} (\varphi^{\frac{q}{2}} b^{\frac{p}{2}})^2 \,dx \leq C \int_{\Omega} \varphi^{q-2} b^{p+2}\,dx \leq Cp^2 \int_{\Omega}\varphi^{q-2} b^{p+2} \,dx. $$ 
When $n>2$, we have $\varphi^{\frac{q}{2}} b^{\frac{p}{2}}  \in W^{1,2}(\Omega) \hookrightarrow L^{\frac{2n}{n-2}}(\Omega) $ by Sobolev embedding theorem and therefore
\begin{eqnarray*}
    \int_{\Omega} \varphi^{\frac{nq}{n-2}} b^{\frac{np}{n-2}} \,dx \leq \left(C p^2 \int_{\Omega} \varphi^{q-2} b^{p+2} \,dx\right) ^{\frac{n}{n-2}}.
\end{eqnarray*}
Denote $\gamma=\frac{n}{n-2}$, we fix an integer $k_0\ge\frac{\ln n}{\ln\gamma}$, and take $p_0 = \gamma^{k_0}$, $q_0 = (2n+1)p_0$ to initiate the iteration. For $k = 1,\cdots,k_0$, let
\begin{eqnarray*}
    p_{k} &=& \gamma^{-1} p_{k-1}+2=\gamma^{-k}p_0+2\sum^{k-1}_{i=0}\gamma^{-i}, \\
    q_{k} &=& \gamma^{-1} q_{k-1}-2=\gamma^{-k}q_0-2\sum^{k-1}_{i=0}\gamma^{-i}.
\end{eqnarray*}
We can check that $p_k \geq n$ and $q_k \ge 2 $ satisfy $\frac{q_k}{p_k} \leq 2n+1$ all along.
Then we can rewrite the formula as
$$
\int_{\Omega} \varphi^{q_{k-1}} b^{p_{k-1}}\,dx \leq \left[C (p_k-2)^2 \int_{\Omega} \varphi^{q_{k}} b^{p_k} \,dx\right]^{\gamma}.
$$
From the expression of $p_k$, we have $p_k-2 \leq n\gamma^{-k} p_0 $. 
Therefore, an iteration process implies
\begin{eqnarray*}
\int_{\Omega} \varphi^{q_{0}} b^{p_0}\,dx &\leq& C^{\sum^{k_0}_{k=1}\gamma^k}  \prod^{k_0}_{k=1}(p_k-2)^{2\gamma^k}\left(\int_{\Omega} \varphi^{q_{k_0}} b^{p_{k_0}}\,dx\right)^{\gamma^{k_0}}\\
&\leq& C^{\sum^{k_0}_{k=1}\gamma^k} \prod^{k_0}_{k=1}(n\gamma^{-k}p_0)^{2\gamma^k} \left(\int_{\Omega} \varphi^{q_{k_0}}b^{p_{k_0}}\,dx\right)^{\gamma^{k_0}} \\
& =& C^{\sum^{k_0}_{k=1}\gamma^k}  \gamma^{\sum^{k_0}_{k=1}(k_0-k)2\gamma^k} \left(\int_{\Omega} \varphi^{q_{k_0}}b^{p_{k_0}}\,dx\right)^{p_0}.
\end{eqnarray*}
Notice that $\frac{1}{\gamma^{k_0}} \sum^{k_0}_{k=1}\gamma^k $ and $ \frac{1}{\gamma^{k_0}} \sum^{k_0}_{k=1}(k_0-k)\gamma^k $ can both be bounded by some constant $C(n)$. So we get
\begin{eqnarray*}
\|\varphi^{2n+1}b\|_{L^{p_0}(\Omega)} = \left(\int_{\Omega} \varphi^{q_{0}} b^{p_0} \,dx \right)^\frac 1{p_0} \leq C \int_{\Omega} \varphi^{q_{k_0}} b^{p_{k_0}} \,dx .
\end{eqnarray*}
From the choice of $p_0, q_0 $ and $k_0$, we can verify that $p_{k_0} \leq n+1 $ and $q_{k_0} \geq n+1 > n+1-\alpha$.
So the above estimate implies 
$$ \|\varphi^{2n+1} b\|_{L^{p_0} (\Omega)} \leq C \int_{\Omega} \varphi^{n+1-\alpha}b^{n+1} \,dx. $$
Let $k_0\rightarrow +\infty$, that is $p_0 \to +\infty$, we have the conclusion.

When $n=2$, take $\gamma>1$ to be any fixed number, for example $\gamma=2$, then $W^{1,2}\hookrightarrow L^{\gamma}$. Following the above iteration process, one can still obtain
$$ \| \varphi^{2n+1} b\|_{L^{\infty}(\Omega) } \leq C \int_{\Omega} \varphi^{n+1-\alpha} b^{n+1} \,dx .$$
This completes the proof of the proposition.
\end{proof}

\bibliography{References}
\bibliographystyle{acm}

\end{document}